\newcommand{\sysn}{\left\{\begin{array}{rcl}}
\newcommand{\sysk}{\end{array}\right.}
\newtheorem{theorem}{Theorem}[section]
\newtheorem{lemma}[theorem]{Lemma}
\theoremstyle{example}
\newtheorem{example}[theorem]{Example}
\newtheorem{proposition}[theorem]{Proposition}
\theoremstyle{definition}
\newtheorem{definition}[theorem]{Definition}
\newtheorem{corollary}[theorem]{Corollary}
\begin{document}

\begin{frontmatter}



\title{On separability of the functional space with the open-point and bi-point-open topologies}


\author{Alexander V. Osipov}

\ead{OAB@list.ru}

\address{Ural Federal
 University, Institute of Mathematics and Mechanics, Ural Branch of the Russian Academy of Sciences, 16,
 S.Kovalevskaja street, 620219, Ekaterinburg, Russia}

\begin{abstract}
In this paper we study the property of separability of functional
space $C(X)$ with the open-point and bi-point-open topologies. We
show that it is consistent with $ZFC$ that there is a set of reals
of cardinality the continuum such that a set $C(X)$ with the
open-point topology isn't a separable space. We also show in a set
model (the iterated perfect set model) that for every set of reals
$X$ a set $C(X)$ with bi-point-open topology is a separable space.
\end{abstract}

\begin{keyword}
open-point topology \sep bi-point-open topology \sep separability
\sep strongly null set


\MSC 54C40 \sep 54C35 \sep 54D60 \sep 54H11 \sep 46E10

\end{keyword}

\end{frontmatter}



\section{Introduction}

The space $C(X)$ with the point-open topology (also known as the
topology of pointwise convergence) is denoted by $C_{p}(X)$. It
has a subbase consisting of sets of the form

$[x,V]^{+}=\{f\in C(X): f(x)\in V\}$,

where $x\in X$ and $V$ is an open subset of real line
$\mathbb{R}$. In paper \cite{amk} was introduced two new
topologies on $C(X)$ that we call the open-point topology and the
bi-point-open topology. The open-point topology on $C(X)$ has a
subbase consisting of sets of the form

$[U,r]^{-}=\{f\in C(X): f^{-1}(r)\bigcap U\neq\emptyset\}$,

where $U$ is an open subset of $X$ and $r\in \mathbb{R}$. The
open-point topology on $C(X)$ is denoted by $h$ and the space
$C(X)$ equipped with the open-point topology $h$ is denoted by
$C_{h}(X)$.

Now the bi-point-open topology on $C(X)$ is the join of the
point-open topology $p$ and the open-point topology $h$. It is the
topology having subbase open sets of both kind: $[x,V]^{+}$ and
$[U,r]^{-}$, where $x\in X$ and $V$ is an open subset of
$\mathbb{R}$, while $U$ is an open subset of $X$ and $r\in
\mathbb{R}$. The bi-point-open topology on the space $C(X)$ is
denoted by $ph$ and the space $C(X)$ equipped with the
bi-point-open topology $ph$ is denoted by $C_{ph}(X)$. One can
also view the bi-point-open topology on $C(X)$ as the weak
topology on $C(X)$ generated by the identity maps $id_{1}:
C(X)\mapsto C_{p}(X)$ and $id_{2}: C(X)\mapsto C_{h}(X)$.

 In \cite{amk} and \cite{amk1}, the separation and countability properties of these two
 topologies on $C(X)$ have been studied.

In \cite{amk} the following statements were proved.

\medskip

$\bullet$ $C_{h}(\mathbb{P})$ is separable. (Proposition 5.1.)

$\bullet$ If $C_{h}(X)$ is separable, then every open subset of
$X$ is uncountable. (Theorem 5.2.)

$\bullet$ If $X$ has a countable $\pi$-base consisting of
nontrivial connected sets, then $C_{h}(X)$ is separable. (Theorem
5.5.)

$\bullet$ If $C_{ph}(X)$ is separable, then every open subset of
$X$ is uncountable. (Theorem 5.8.)

$\bullet$ If $X$ has a countable $\pi$-base consisting of
nontrivial connected sets and a coarser metrizable topology, then
$C_{ph}(X)$ is separable. (Theorem 5.10.)

\medskip

 In the present paper, we will continue to study the separability of spaces $C_{h}(X)$ and
 $C_{ph}(X)$.

In this paper we use the following conventions. The symbols
$\mathbb{R}$, $\mathbb{P}$, $\mathbb{Q}$ and $\mathbb{N}$ denote
the space of real numbers, irrational numbers, rational numbers
and natural numbers, respectively. Recall that a dispersion
character $\Delta(X)$ of $X$ is the minimum of cardinalities of
its nonempty open subsets.

By {\it set of reals} we mean a zero-dimensional, separable
metrizable space every non-empty open set which has the
cardinality the continuum.

\section{Main results}

Note that if the space $C_{h}(X)$ is a separable space then
$\Delta(X)\geq\mathfrak{c}$. Really, if $A=\{f_{i}\}$ is a
countable dense set of $C_{h}(X)$ then for each non-empty open set
$U$ of $X$ we have $\bigcup f_{i}(U)=\mathbb{R}$. It follows that
$|U|\geq\mathfrak{c}$.

Also note that if the space $C_{ph}(X)$ is a separable space then
$C_{p}(X)$ is a separable space and $C_{h}(X)$ is a separable. It
follows that $X$ is a separable submetrizable (coarser separable
metric topology) space and $\Delta(X)=\mathfrak{c}$.

Note also that if the space $C_{h}(X)$ is a separable space then
any point $x\in X$ isn't $P$-point (point for which the family of
neighbourhoods is closed under countable intersections) of $X$.

\begin{definition} Let $X$ be a topological space. A set
$A\subseteq X$ will be called {\it $\mathcal{I}$-set}  if there is
a continuous function $f\in C(X)$ such that $f(A)$ contains an
interval $\mathcal{I}=[a,b]\subset \mathbb{R}$.
\end{definition}

It is easily seen that in Definition the set $\mathcal{I}=[a,b]$
 can be replaced by $\mathbb{C}=2^{\omega}$ or $\mathbb{P}=\omega^{\omega}$.

\medskip
It is known that there exists a subset $B\subset \mathbb{R}$ such
that no uncountable closed set of $\mathbb{R}$ is contained either
$B$ or $\mathbb{R}\setminus B$. Such a subset $B$ is called a
Bernstein set.

Marcin Kysiak (in personal correspondence) was seen following
lemma.

\begin{lemma} \label{lem1} Let $B$ be a Bernstein set and $U$ be an non-empty
open set in $B$. Then $U$ is $\mathcal{I}$-set.
\end{lemma}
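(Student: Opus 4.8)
The plan is to reduce the statement to constructing a single continuous function with very fat fibres and then let the Bernstein property of $B$ do the counting. First I would unwind the hypothesis: since $U$ is non-empty and open in $B$, write $U=B\cap W$ with $W$ open in $\mathbb{R}$, fix a point of $U$, and choose a non-degenerate closed interval $J\subseteq W$ containing it, so that $B\cap J\subseteq U$. By the remark following the Definition it suffices to exhibit a continuous $f\in C(B)$ with $f(U)\supseteq 2^{\omega}$ (a copy of the Cantor set). The guiding idea is to build $f$ so that a whole Cantor set $C_{0}$ of its values has fibres that are \emph{perfect} sets sitting inside $J$; recalling that the Bernstein property says precisely that $B$ meets every uncountable closed subset of $\mathbb{R}$, each such fibre must then contain a point of $B$, and these points realise all of $C_{0}$ in the image.

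The construction is where the work lies. I would fix a Cantor set $K\subseteq J$ and use the standard homeomorphisms $K\cong 2^{\omega}\cong 2^{\omega}\times 2^{\omega}$ together with a homeomorphism $\iota$ of $2^{\omega}$ onto the middle-thirds Cantor set $C_{0}\subseteq[0,1]$. Setting $f_{0}=\iota\circ\pi_{1}\colon K\to C_{0}$, where $\pi_{1}$ is the projection onto the first factor, gives a continuous surjection of $K$ onto $C_{0}$ whose fibre over each $c\in C_{0}$ is a copy of $\{\ast\}\times 2^{\omega}$, hence a Cantor set, hence an uncountable closed subset of $\mathbb{R}$ contained in $K\subseteq J$. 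Since $K$ is compact (so closed in $\mathbb{R}$) and $[0,1]$ is an absolute retract, the Tietze extension theorem lets me extend $f_{0}$ to a continuous $f\colon\mathbb{R}\to[0,1]$; restricting to $B$ yields the desired $f\in C(B)$.

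Finally I would close the argument with the Bernstein hypothesis. For each $c\in C_{0}$ the set $f_{0}^{-1}(c)$ is an uncountable closed subset of $\mathbb{R}$, so it is not contained in $\mathbb{R}\setminus B$; thus there is $x_{c}\in B\cap f_{0}^{-1}(c)\subseteq B\cap K\subseteq B\cap J\subseteq U$ with $f(x_{c})=c$. Hence $f(U)\supseteq C_{0}\cong 2^{\omega}$, and $U$ is an $\mathcal{I}$-set. I expect the main obstacle to be producing a continuous map whose fibres over a continuum of values are simultaneously perfect; the point is that no monotone or finite-to-one map will do, and the decomposition $2^{\omega}\cong 2^{\omega}\times 2^{\omega}$ is exactly the device that makes a projection-type function with Cantor fibres available, after which the Bernstein property converts "uncountable closed fibre" into "fibre meets $B$" essentially for free.
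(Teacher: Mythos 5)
Your proof is correct, and its engine is the same one that drives the paper's proof: decompose a Cantor-type subspace of an interval as a square, project onto the first factor so that every fibre is a perfect set, and let the Bernstein property place a point of $B$ in each fibre, so that the image of the $B$-points covers the whole first factor. The differences are in the packaging, and each choice buys something. The paper works with $U_n\setminus D$, where $\{U_n\}$ is a countable base of intervals whose endpoints lie in a countable dense set $D\subseteq\mathbb{R}\setminus B$; this set is homeomorphic to $\omega^{\omega}$, is decomposed as $\omega^{\omega}\times\omega^{\omega}$, and the projection is followed by a continuous surjection of $\omega^{\omega}$ onto $\mathbb{R}$, so the resulting function maps $U\cap B$ onto all of $\mathbb{R}$; the price is the extension step, which needs the endpoints of $U_n$ to lie outside $B$ so that $B\cap U_n$ is clopen in $B$ and the function can be extended over the rest of $B$. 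You instead place a compact Cantor set $K$ inside the interval and decompose $K\cong 2^{\omega}\times 2^{\omega}$; compactness makes the extension a one-line application of Tietze (no dense set $D$ avoiding $B$, no global base --- you argue entirely locally inside the given $U$), but your image is only a Cantor set $C_{0}$, so you must invoke the paper's remark that $[a,b]$ may be replaced by $2^{\omega}$ in the definition of $\mathcal{I}$-set --- a remark whose (omitted) proof hides exactly the surjection-plus-extension step that the paper performs explicitly. Both arguments are complete; yours is shorter and more self-contained modulo that remark, while the paper's yields the stronger conclusion $f[U\cap B]=\mathbb{R}$ directly. One cosmetic point: your $f_{0}=\iota\circ\pi_{1}$ should formally be $\iota\circ\pi_{1}\circ h$ with $h\colon K\to 2^{\omega}\times 2^{\omega}$ the chosen homeomorphism, but the intent is unambiguous.
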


\begin{proof} Let $D\subset \mathbb{R}\setminus B$ be a countable
dense subset of the real line and let $\{U_n : n\in \omega \}$ be
a countable topology base consisting of open intervals with
endpoints in $D$. For every $n\in\omega$ the set $U_n\setminus D$
is homeomorphic to the Baire space $\omega^{\omega}$, and hence it
is homeomorphic to its Cartesian square $\omega^{\omega}\times
\omega^{\omega}$; let $h_n : (U_n \setminus D) \rightarrow
\omega^{\omega}\times \omega^{\omega}$ be a homeomorphism. As
every uncountable perfect Polish space is a continuous image of
$\omega^{\omega}$, let us fix a continuous mapping $F$ from
$\omega^{\omega}$ onto $\mathbb{R}$. Let us define $g_n : (U_n
\setminus D) \rightarrow \mathbb{R}$ as $g_n = F \circ \pi_1\circ
h_n$, where $\pi_1: \omega^{\omega}\times \omega^{\omega}
\rightarrow \omega^{\omega}$ is the projection on the first
coordinate, i.e. $\pi_1(x,y)=x $ for $x,y\in \omega^{\omega}$. As
for every $x\in \omega^{\omega}$ the set $h_{n}^{-1}[\{x\}\times
\omega^{\omega}]$ contains a perfect set, we have $B\bigcap
h_{n}^{-1}[\{x\}\times \omega^{\omega}]\neq \emptyset$ and
consequently $h_n[B]\bigcap (\{x\}\times \omega^{\omega})\neq
\emptyset$  so $\pi_1\circ h_n[B]=\omega^{\omega}$, hence $F\circ
\pi_1\circ h_n[B]=\mathbb{R}$. We have shown that
$g_n[B]=\mathbb{R}$ for every $n\in \omega$, where $g_n$ is a
continuous function defined on an open interval $U_n$. As the
endpoints of $U_n$ do not belong to $B$, the function
$(g_n)\upharpoonright B$ can be easily extended to a continuous
function $f : B \rightarrow \mathbb{R}$ which is still onto
$\mathbb{R}$ by the property of $g_n$. Let now $U\subset
\mathbb{R}$ be a nonempty open set. As $\{U_n : n\in \omega \}$
was a topology base, there exists $n\in \omega$ such that $U_n
\subseteq U$. Then $f[U \bigcap B] = \mathbb{R}$.

\end{proof}

\begin{theorem}\label{th1}
Let $X$ be a Tychonoff space and  $C_{h}(X)$ be a separable space.
Then $X$ has a $\pi$-network consisting of $\mathcal{I}$-sets.

\end{theorem}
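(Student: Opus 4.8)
The plan is to start from a countable set $\{f_n:n\in\omega\}$ dense in $C_h(X)$ and upgrade the elementary remark preceding the theorem into a \emph{uniform} surjectivity property. Since $X$ is Tychonoff, for every nonempty open $V\subseteq X$ and every $r\in\mathbb{R}$ the subbasic set $[V,r]^{-}$ is nonempty (choose $x\in V$ and a continuous function sending $x$ to $r$), so density gives some $f_n\in[V,r]^{-}$, i.e. $r\in f_n(V)$; hence $\bigcup_n f_n(V)=\mathbb{R}$ for \emph{every} nonempty open $V$, not merely for $X$. Next I would reduce the conclusion to a single clean assertion: it suffices to show that each nonempty open $U$ admits a $g\in C(X)$ with $g(U)$ containing an interval, for then $N:=g^{-1}([a,b])\cap U$ is an $\mathcal{I}$-set contained in $U$, and ranging over a $\pi$-base of such $U$ yields the desired $\pi$-network of $\mathcal{I}$-sets. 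By the remark that $[a,b]$ may be replaced by $2^{\omega}$ or $\omega^{\omega}$, this is in turn equivalent to producing a continuous $g$ with $g(U)$ containing a perfect set.

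For the construction I would imitate the proof of Lemma~\ref{lem1}. Form the diagonal map $\Phi=(f_n)_n:X\to\mathbb{R}^{\omega}$ and put $W=\Phi(U)$; the uniform surjectivity above translates into the statement that every nonempty relatively open piece $W\cap G$ of $W$ satisfies $\bigcup_n\pi_n(W\cap G)=\mathbb{R}$, so in particular every such piece is uncountable. The goal becomes finding a continuous $\psi:\mathbb{R}^{\omega}\to\mathbb{R}$ with $\psi(W)\supseteq[a,b]$, after which $g=\psi\circ\Phi$ works. Exactly as in the Bernstein case, the idea is to arrange the fibres $\psi^{-1}(t)$ to be perfect for $t$ in an interval and to exploit that the ``large'' set $W$ meets each of these fibres, playing the role that ``$B$ meets every perfect set'' plays in Lemma~\ref{lem1}; the projection $\pi_1\circ h_n$ and the surjection $F$ there are the concrete template for such a perfect-fibre map.

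The hard part will be realizing the interval by \emph{genuine} points of $g(U)$ rather than by points of the closure. Passing to $\overline{W}$ and invoking the perfect-set property for analytic sets easily locates a perfect set inside some $\overline{\pi_n(W)}$, but its points need not lie in the actual image $f_n(U)$, and completeness of the domain is unavailable: a Bernstein set has $C_h$ separable yet contains no nonempty compact perfect subset, so no naive compactness argument can supply the branch points. To bridge this I would build a Cantor scheme $\{V_s:s\in 2^{<\omega}\}$ of nonempty open subsets of $U$ with $\overline{V_{s^\frown i}}\subseteq V_s$ and $V_{s^\frown 0}\cap V_{s^\frown 1}=\emptyset$ (possible since $\Delta(X)\geq\mathfrak{c}$ and $X$ is Tychonoff), while constructing $g$ as a uniform limit $g=\lim_k g_k$ that forces $g(V_s)$ into the $s$-th dyadic subinterval, using the node-wise surjectivity $\bigcup_n f_n(V_s)=\mathbb{R}$ to keep every branch alive and to pin actual points of $X$ realizing the branch values, so that $g(U)\supseteq[0,1]$. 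The crux, and the step I expect to be the main obstacle, is precisely this coupling of the scheme to the functions $f_n$ so that each of the continuum-many branches is witnessed by a true point of $X$; this is where the full strength of density in $C_h(X)$ (the Kysiak-type ``meeting every fibre'' phenomenon), and not merely the cardinality bound $\Delta(X)\geq\mathfrak{c}$, has to be used.
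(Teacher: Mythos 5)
Your two preliminary reductions are correct and match the paper's starting point: density of $\{f_n\}$ gives $\bigcup_n f_n(V)=\mathbb{R}$ for every nonempty open $V\subseteq X$, and the theorem reduces to showing that every nonempty open $U$ is an $\mathcal{I}$-set. The gap is exactly the step you yourself flag as ``the main obstacle,'' and it is not a technical loose end but the whole content of the theorem. The node-wise surjectivity $\bigcup_n f_n(V_s)=\mathbb{R}$ is a statement about the given functions $f_n$; it gives no control over the function $g$ you are constructing, and the constraint $g(V_s)\subseteq I_s$ only bounds the image of $g$ --- it produces no point of $U$ realizing a prescribed value. For a branch $\sigma\in 2^{\omega}$ the only candidate witnesses are points of $\bigcap_k V_{\sigma\upharpoonright k}$, and, as you observe, these intersections can be empty (a Bernstein set contains no compact perfect subset), while there are continuum many branches against only countably many surjectivity facts. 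Your fallback --- perfect fibres in the style of Lemma \ref{lem1} --- also does not transfer: Kysiak's argument needs the domain to meet \emph{every} perfect set (the Bernstein property), whereas for a general Tychonoff $X$ with $C_{h}(X)$ separable the set $\Phi(U)\subseteq\mathbb{R}^{\omega}$ may avoid many Cantor sets; density only says that $\Phi(V)$ meets $\pi_n^{-1}(r)$ for \emph{some} $n$ depending on $r$ and $V$, which is far weaker.

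The paper's proof never constructs the witnessing function at all; it runs the argument in the opposite direction, by contradiction, using a dichotomy your proposal lacks. Suppose some nonempty open $U$ is not an $\mathcal{I}$-set; then, by the remark after the Definition, no $f_i(U)$ contains a Cantor set. Now either $\mathbb{R}\setminus f_1(U)$ also contains no Cantor set --- in which case $f_1(U)$ is a Bernstein set and Lemma \ref{lem1} gives a continuous function on $f_1(U)$ whose image contains an interval, contradicting the choice of $U$ --- or there is a Cantor set $\mathbb{C}_1$ disjoint from $f_1(U)$. Iterating this dichotomy inside $\mathbb{C}_1$ for $f_2$, then inside $\mathbb{C}_2$ for $f_3$, and so on, produces a decreasing sequence of Cantor sets with $f_i(U)\cap\mathbb{C}_i=\emptyset$; by compactness there is $r\in\bigcap_i\mathbb{C}_i$, and then $[U,r]^{-}$ is a nonempty open subset of $C_{h}(X)$ containing no $f_i$, contradicting density. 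So in the paper density is spent at the very end, to rule out a single avoided value $r$, and the only place an interval-covering function is ever built is inside Lemma \ref{lem1}, on a subset of $\mathbb{R}$, where the Bernstein property substitutes for the compactness your Cantor scheme lacks. Without this dichotomy-plus-nested-Cantor-sets mechanism, or a genuinely new idea for pinning branch points, your proposal cannot be completed as described.
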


\begin{proof} Let set $A=\{f_i\}$ be a countable dense subset of $C_{h}(X)$. Suppose, contrary our claim, that there is non-empty open
set $U$ such that for any $f\in C(X)$ the set $f(U)$ don't
contains interval of real line. Note that if for every Cantor set
$\mathbb{C}$ holds $(\mathbb{R}\setminus f_1(U))\nsupseteq
\mathbb{C}$ then $f_1(U)$ is Bernstein set.

 By lemma ~\ref{lem1}, there is continuous function
$g\in C(f_1(U))$ such that $g(f_1(U))$ contains interval of real
line. This contradicts our assumption. It follow that there is
Cantor set $\mathbb{C}_{1}$ such that $f_1(U)\bigcap
\mathbb{C}_{1}=\emptyset$. For the set $f_{2}(U)$ we have that
there is Cantor set $\mathbb{C}_{2}$ such that
$\mathbb{C}_{2}\subseteq \mathbb{C}_{1}$ and $f_2(U)\bigcap
\mathbb{C}_{2}=\emptyset$. We can now proceed analogously to the
$f_i(U)$ for each $i>2$. As a result of the induction, we obtain
countable family of Cantor sets $\{\mathbb{C}_{i}\}_{i}$ such that
$\mathbb{C}_{i+1}\subseteq \mathbb{C}_{i}$ for each $i\in
\mathbb{N}$. Choose $r\in \bigcap_{i} \mathbb{C}_{i}$ we have
$f_{i}\notin [U,r]^{-}$ for each $i\in \mathbb{R}$, which
contradicts density of set $A$.

\end{proof}

\begin{theorem}\label{th2}
Let $X$ be a Tychonoff space with countable $\pi$-base, then the
following are equivalent.

\begin{enumerate}

\item $C_{ph}(X)$ is a separable space.

\item $X$ is separable submetrizable space and it has a countable
$\pi$-network consisting of $\mathcal{I}$-sets.

\end{enumerate}

\end{theorem}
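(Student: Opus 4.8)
The plan is to prove the two implications separately, treating $(1)\Rightarrow(2)$ as essentially a consequence of facts already recorded and $(2)\Rightarrow(1)$ as the substantive construction. For $(1)\Rightarrow(2)$: since $p\subseteq ph$ and $h\subseteq ph$, any countable $ph$-dense set is dense in both $C_p(X)$ and $C_h(X)$, so both are separable. Separability of $C_p(X)$ gives $iw(X)=\aleph_0$, i.e. $X$ carries a coarser separable metrizable topology, while the hypothesis that $X$ has a countable $\pi$-base makes $X$ separable; thus $X$ is separable submetrizable. For the $\pi$-network, I would note that the proof of Theorem~\ref{th1} in fact shows that \emph{every} non-empty open subset of $X$ is an $\mathcal I$-set; hence a fixed countable $\pi$-base of $X$ is already a countable $\pi$-network consisting of $\mathcal I$-sets.

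For $(2)\Rightarrow(1)$ I would first assemble three countable ingredients. (a) Fix a continuous injection $e=(e_k)\colon X\to\mathbb R^{\omega}$ realizing the coarser separable metric topology, with metric $d$ on $Z=e(X)$; the rational polynomials in finitely many of the coordinate functions $e_k$ form a countable family $\mathcal P\subseteq C(X)$ dense in $C_p(X)$, since distinct points of $X$ are separated by finitely many coordinates and Lagrange interpolation with rational coefficients approximates any prescribed values. (b) For each $\mathcal I$-set $A_n$ of the $\pi$-network fix, using the observation following the definition and a continuous surjection of a Cantor (or Baire) subset of $\mathbb R$ onto $\mathbb R$, a function $f_n\in C(X)$ with $f_n(A_n)=\mathbb R$; thus $f_n$ already attains \emph{every} real value somewhere inside $A_n$. (c) For each $n$ and each rational $\varepsilon>0$ let $\phi_{n,\varepsilon}\in C(X)$ be a clamped function of $d(e(\cdot),\overline{e(A_n)})$ equal to $1$ on $A_n$ and vanishing off the $2\varepsilon$-neighbourhood; these are $\tau_X$-continuous and countable in number.

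The countable dense set is then
\[ D=\Bigl\{\bigl(1-\textstyle\sum_{j\le m}\phi_{n_j,\varepsilon_j}\bigr)P+\sum_{j\le m}\phi_{n_j,\varepsilon_j}f_{n_j}\ :\ P\in\mathcal P,\ m\in\omega,\ n_j\in\omega,\ \varepsilon_j\in\mathbb Q^{+}\Bigr\}. \]
Given a non-empty basic $ph$-open set $W=\bigcap_{i\le k}[x_i,V_i]^{+}\cap\bigcap_{j\le m}[U_j,r_j]^{-}$, I would pick $P\in\mathcal P$ with $P(x_i)\in V_i$ (possible since $\mathcal P$ is $C_p$-dense and a witness for $W\neq\emptyset$ already satisfies the point conditions), choose $\mathcal I$-sets $A_{n_j}\subseteq U_j$ from the $\pi$-network, and rationals $\varepsilon_j$ so small that the supports of the $\phi_{n_j,\varepsilon_j}$ are pairwise disjoint and miss every $x_i$. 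The corresponding $g\in D$ then equals $P$ at each $x_i$, so $g(x_i)\in V_i$, and equals $f_{n_j}$ on all of $A_{n_j}$; since $f_{n_j}(A_{n_j})=\mathbb R$, the function $g$ attains $r_j$ at some point of $A_{n_j}\subseteq U_j$, so $g\in[U_j,r_j]^{-}$. Hence $g\in W\cap D$ and $D$ is dense. The decisive point is that one and the same $g$ coincides with $f_{n_j}$ on all of $A_{n_j}$ and so hits every real there at once: this is what allows a countable $D$ to meet the sets $[U_j,r_j]^{-}$ for all continuum-many possible targets $r_j$ simultaneously.

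The main obstacle is the separation required in steps (c) and the selection: to make $\phi_{n_j,\varepsilon_j}$ both equal to $1$ on the entire (possibly metrically spread-out) set $A_{n_j}$ and equal to $0$ at the $x_i$ and on the other chosen $\mathcal I$-sets, the sets $e(A_{n_j})$ and the points $e(x_i)$ must be pairwise at positive $d$-distance. I expect to secure this by exploiting the $\pi$-network once more: inside $U_j\cap e^{-1}(B)$ for a small metric ball $B$ there is still a $\pi$-network element, so each $A_{n_j}$ may be chosen of arbitrarily small $d$-diameter and centred far from the finitely many earlier-chosen points and sets (each $U_j$ has cardinality $\mathfrak c$, hence $e(U_j)$ is uncountable and cannot be covered by finitely many small balls). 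Verifying that these small-diameter $\mathcal I$-sets can be placed pairwise $d$-apart and away from the $x_i$ is the one genuinely delicate bookkeeping step; everything else is routine.
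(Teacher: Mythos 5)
Your step (b) contains a genuine gap, and it is not a repairable typo but the load-bearing idea of your construction. You require, for each $\pi$-network element $A_n$, a function $f_n\in C(X)$ with $f_n(A_n)=\mathbb{R}$. Such a function need not exist: being an $\mathcal{I}$-set only guarantees that some continuous image of $A_n$ contains an interval, and if $A_n$ is compact (take $X=[0,1]$ with the $\pi$-network of nontrivial closed subintervals, all of which are $\mathcal{I}$-sets via the identity map), then $f_n(A_n)$ is compact and can never equal $\mathbb{R}$. Your proposed justification also fails on its own terms: a Cantor set is compact, so it admits no continuous surjection onto $\mathbb{R}$; and while a Baire-space subset of $[a,b]$ does surject continuously onto $\mathbb{R}$, such a surjection cannot be extended continuously over $[a,b]$ (again because the image would have to be compact), so you cannot compose it with the witness function for $A_n$ to obtain a member of $C(X)$. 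Consequently the ``decisive point'' you highlight --- one $g\in D$ coinciding with $f_{n_j}$ on $A_{n_j}$ and hence hitting every real value there at once --- rests on a false premise, and the density verification collapses exactly at the sets $[U_j,r_j]^{-}$.

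The paper's proof shows what you actually need, which is weaker and always available: countably many functions per $\pi$-network element rather than a single one. For each $S_i$ with witness $h_i$ satisfying $h_i(S_i)\supseteq[a_i,b_i]$, the paper forms all affine rescalings $h_{i,p,q}$ with rational parameters $p,q$, so that $h_{i,p,q}(S_i)$ contains the interval with endpoints $p$ and $q$; then, to meet $[U_j,r_j]^{-}$, one chooses rationals $p_j<r_j<q_j$ and localizes $h_{i,p_j,q_j}$ using pairs $\overline{B_j}\subseteq B_k$ from the countable base of the coarser metric topology (these play exactly the role of your bump functions $\phi_{n,\varepsilon}$). The point your intuition missed is that a countable family copes with continuum-many targets $r_j$ not because some single member hits all reals on $A_n$, but because each member hits an entire interval of values on $A_n$, and every real lies in the interior of some rational interval $[p,q]$. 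If you replace your single $f_n$ by this countable family of affine copies, the rest of your argument (the interpolating $C_p$-dense family $\mathcal{P}$, the metric bump functions, the choice of small-diameter $A_{n_j}$ inside $U_j\cap e^{-1}(B)$) goes through and becomes essentially the paper's proof in partition-of-unity clothing; your direction $(1)\Rightarrow(2)$ is correct as written and agrees with the paper.
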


\begin{proof} $(1) \Rightarrow (2)$. The map $id_{2}: C_{ph}(X)\mapsto
C_{h}(X)$ is continuous map, hence $C_{h}(X)$ is separable space.
By Theorem \ref{th1}, the space $X$ has a countable $\pi$-network
consisting of $\mathcal{I}$-sets. The map $id_{1}:
C_{ph}(X)\mapsto C_{p}(X)$ is continuous map, hence $C_{p}(X)$ is
separable space. It follow that $X$ is a separable submetrizable
space.

$(2) \Rightarrow (1)$. Let $S=\{S_i\}$ be a countable
$\pi$-network of $X$ consisting of  $\mathcal{I}$-sets. By
definition of $\mathcal{I}$-sets, for each $S_{i}\in S$ there is
the continuous function $h_{i}\in C(X)$ such that $h_{i}(S_{i})$
contains an interval $[a_{i},b_{i}]$ of real line. Consider a
countable set

$\{ h_{i,p,q}(x)=\frac{p-q}{a_{i}-b_{i}}*h_{i}(x)+p-
\frac{p-q}{a_{i}-b_{i}}*a_{i}\}$

of continuous functions on $X$, where $i\in \mathbb{N}$, $p,q\in
\mathbb{Q}$. Let $\beta=\{B_j\}$ be countable base of $(X,\tau_1)$
where $\tau_1$ is separable metraizable topology on $X$ because of
$X$ is separable submetrizable space. For each pair
$(B_{j},B_{k})$ such that $\overline{B_{j}}\subseteq B_{k}$ define
continuous functions

$$h_{i,p,q,j,k}(x)= \left\{
\begin{array}{rcl}
h_{i,p,q}(x) \,\, \, \, \, \, for \, \, \,  x\in B_{j}  \\
$\bf{0}$ \, \, \, \, \, \, \, \, \,  for \, \, \,  x\in X\setminus B_{k}.   \\
\end{array}
\right.
$$

 and for each $v\in \mathbb{Q}$

$$d_{j,k,v}(x)= \left\{
\begin{array}{rcl}

v  & for &  x\in B_{j}                              \\
$\bf{0}$ &  for &  x\in X\setminus B_{k}.   \\
\end{array}
\right.
$$

Let $G$ be the set of finite sum of functions $h_{i,p,q,j,k}$ and
$d_{j,k,v}$ where $i,j,k\in\mathbb{N}$ and $p,q,v\in \mathbb{Q}$.
We claim that the countable set $G$ is dense set of $C_{ph}(X)$.

 By proposition 2.2 in \cite{amk}, let

 $W=[x_1,V_1]^{+}\bigcap...\bigcap[x_m,V_m]^{+}\bigcap[U_1,r_1]^{-}\bigcap...\bigcap[U_n,r_n]^{-}$
be a base set of $C_{ph}(X)$ where $n,m\in\mathbb{N}$, $x_{i}\in
X$, $V_{i}$ is open set of $\mathbb{R}$ for $i\in \overline{1,m}$,
$U_{j}$ is open set of $X$ and $r_j\in \mathbb{R}$ for $j\in
\overline{1,n}$ and for $i\neq j$, $x_{i}\neq x_{j}$ and
$\overline{U_{i}}\bigcap \overline{U_{j}}=\emptyset$.

Fix points $y_j\in U_j$ for $j=\overline{1,n}$ and choose
$B_{s_l}\in \beta$ for $l=\overline{1,n+m}$ such that
$\overline{B_{s_{l_1}}}\bigcap \overline{B_{s_{l_2}}}=\emptyset$
for $l_1\neq l_2$ and $l_1,l_2\in \overline{1,n+m}$ and $x_i\in
B_{s_l}$ for $l\in \overline{1,m}$ and $y_j\in B_{s_l}$ for $l\in
\overline{m+1,n}$. Choose $B_{s'_l}\in \beta$ for $l\in
\overline{1,m}$ such that $x_i\in B_{s'_l}$ and
$\overline{B_{s'_l}}\subseteq B_{s_l}$ and choose $B_{s'_l}\in
\beta$ for $l\in \overline{m+1,n+m}$  such that
$y_j\in\overline{B_{s'_l}}\subseteq B_{s_l}$ where $l=j+m$.

Fix points $v_i\in (V_i\bigcap \mathbb{Q})$ for $i\in
\overline{1,m}$ and $p_j,q_j\in \mathbb{Q}$ such that
$p_j<r_j<q_j$ for $j=\overline{1,n}$.

Consider $g\in G$ such that

$g=d_{s'_{1},s_{1},v_1}+...+d_{s'_{m},s_{m},v_m}+h_{i_1,p_1,q_1,s'_{m+1},s_{m+1}}+...+h_{i_n,p_n,q_n,s'_{m+n},s_{n+m}}$
where $S_{i_k}\subset B_{s'_l}\bigcap U_k$ for $k=\overline{1,n}$
and $l=k+m$.

Note that $g\in W$. This proves theorem.

\end{proof}

\begin{corollary}
Let $X$ be a Tychonoff space with countable $\pi$-base, then the
following are equivalent.

\begin{enumerate}

\item $C_{h}(X)$ is a separable space.

\item $X$  has a countable $\pi$-network consisting of
$\mathcal{I}$-sets.

\end{enumerate}

\end{corollary}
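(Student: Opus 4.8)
The plan is to derive this corollary as a special case of Theorem~\ref{th2}. The corollary concerns $C_h(X)$ rather than $C_{ph}(X)$, so the strategy is to show that when $X$ already has a countable $\pi$-base, the separability of $C_h(X)$ is equivalent to that of $C_{ph}(X)$, and then to verify that the submetrizability condition appearing in Theorem~\ref{th2} is automatically satisfied and can therefore be dropped.

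First I would prove the implication $(1)\Rightarrow(2)$. Assuming $C_h(X)$ is separable, Theorem~\ref{th1} immediately gives that $X$ has a $\pi$-network consisting of $\mathcal{I}$-sets; since $X$ has a countable $\pi$-base, one can intersect the members of this $\pi$-network appropriately with the base to extract a \emph{countable} $\pi$-network of $\mathcal{I}$-sets (noting that any set containing an $\mathcal{I}$-set is again an $\mathcal{I}$-set, so refining by a base element still yields $\mathcal{I}$-sets, or more simply that the $\pi$-network produced in Theorem~\ref{th1} can be taken countable as in the proof of the corollary's parent theorem). This yields $(2)$.

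For the converse $(2)\Rightarrow(1)$, I would mimic the construction in the proof of Theorem~\ref{th2}, but working only with the open-point subbasic sets $[U,r]^{-}$ and discarding the point-open constraints $[x_i,V_i]^{+}$. Concretely, take the countable $\pi$-network $S=\{S_i\}$ of $\mathcal{I}$-sets and form the associated functions $h_{i,p,q}$ exactly as before; then the family $G'$ of finite sums of the cut-off functions $h_{i,p,q,j,k}$ (where now $\beta$ may be taken to be the given countable $\pi$-base, so no coarser metric topology is needed) is the candidate dense set. Given a basic open set $W=[U_1,r_1]^{-}\cap\dots\cap[U_n,r_n]^{-}$ of $C_h(X)$ with $\overline{U_i}\cap\overline{U_j}=\emptyset$, one selects nested $\pi$-base elements $B_{s'_l}\subseteq B_{s_l}\subseteq U_j$ separating the supports, picks rationals $p_j<r_j<q_j$, and locates $S_{i_k}\subseteq B_{s'_l}\cap U_k$; the corresponding sum $g\in G'$ then satisfies $f^{-1}(r_j)\cap U_j\neq\emptyset$ for each $j$, so $g\in W$.

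The main obstacle, and the place where the hypothesis of a countable $\pi$-base does real work, is the separation of the supports: in Theorem~\ref{th2} the coarser metrizable topology was used to produce a countable base $\beta$ with the property that one can always find $B_j$ with $\overline{B_j}\subseteq B_k$ shrinking around each prescribed point while keeping the finitely many supports pairwise disjoint. Here I would need to argue that a countable $\pi$-base suffices for this purpose when only the $[U,r]^{-}$ constraints are present, since the open-point topology does not require us to control the \emph{value} of $g$ at specified points $x_i$ but only to hit the value $r_j$ somewhere inside each $U_j$; thus one has the freedom to choose the witness points $y_j\in U_j$ inside $\pi$-base elements and to shrink them to disjoint closures using only the $\pi$-base, without needing a genuine base or a metric. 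Verifying that this weaker separation still yields pairwise disjoint supports, and hence a well-defined finite sum lying in $W$, is the step I would treat most carefully.
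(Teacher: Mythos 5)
The paper states this corollary without any proof, so the comparison is with the derivation it is implicitly meant to admit from Theorem~\ref{th1} and the construction in Theorem~2.4; your overall route --- $(1)\Rightarrow(2)$ from Theorem~\ref{th1} plus the countable $\pi$-base, and $(2)\Rightarrow(1)$ by rerunning the Theorem~2.4 construction with the point-open part deleted --- is exactly that intended derivation. Your first half is essentially correct, though the phrase ``intersect the members of this $\pi$-network with the base'' is the wrong operation (a subset of an $\mathcal{I}$-set need not be an $\mathcal{I}$-set); the correct version is the other fact you cite: each $\pi$-base element contains a member of the $\pi$-network from Theorem~\ref{th1}, a superset of an $\mathcal{I}$-set is an $\mathcal{I}$-set, hence the countable $\pi$-base is itself the required countable $\pi$-network of $\mathcal{I}$-sets.

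The genuine gap is in $(2)\Rightarrow(1)$, and it sits precisely at the step you postpone --- but the obstacle is not the one you name. Disjointness of supports is free: the $U_j$ already have pairwise disjoint closures, and you place your $\pi$-base elements inside them. What is not free is the \emph{existence} of the cutoff functions $h_{i,p,q,j,k}$ themselves, i.e.\ continuous functions on $X$ equal to $h_{i,p,q}$ on $B_{s'}$ and identically $0$ off $B_{s}$. In Theorem~2.4 these are obtained as $\phi\cdot h_{i,p,q}$, where $\phi$ is a Urysohn function separating $\overline{B_{s'}}$ from $X\setminus B_{s}$ \emph{in the coarser metric topology}; that is where submetrizability does its work. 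If $\beta$ is merely a countable $\pi$-base of a Tychonoff space, no such $\phi$ need exist: complete regularity separates points from closed sets, not disjoint closed sets, and a space satisfying $(2)$ can be non-normal and non-submetrizable (the double arrow space satisfies $(2)$, which also shows the corollary is not a formal consequence of Theorem~2.4). So the family $G'$ you define may be empty of usable members, and your concluding sentence asserts exactly what remains unproved. The repair is to reverse the order of your choices: for each $B_j$ fix $x_j\in B_j$ and, by complete regularity, $\phi_j\colon X\to[0,1]$ with $\phi_j(x_j)=1$ and $\phi_j\equiv 0$ off $B_j$; put $\psi_j=\theta\circ\phi_j$ where $\theta$ is $0$ on $[0,1/3]$ and $1$ on $[2/3,1]$, so that $\psi_j$ is identically $1$ on the nonempty open plateau $W_j=\{\phi_j>2/3\}\subseteq B_j$; \emph{only then} choose a $\pi$-network element $S_{i(j)}\subseteq W_j$, and let $G'$ consist of finite sums of the functions $\psi_j\cdot h_{i(j),p,q}$, $p,q\in\mathbb{Q}$. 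Given $W=[U_1,r_1]^{-}\cap\dots\cap[U_n,r_n]^{-}$ with pairwise disjoint closures, choose $B_{j_k}\subseteq U_k$ and rationals $p_k<r_k<q_k$; then $g=\sum_{k}\psi_{j_k}h_{i(j_k),p_k,q_k}$ coincides with $h_{i(j_k),p_k,q_k}$ on $S_{i(j_k)}$, whose image contains $[p_k,q_k]\ni r_k$, so $g\in W$. With your order of choices (first the $\mathcal{I}$-set, then a cutoff equal to $1$ on it) the construction needs normality or a coarser metric topology, neither of which you are given.
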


\begin{corollary}\label{th2}
If $X$ is a separable metrizable space, then the following are
equivalent.

\begin{enumerate}

\item $C_{ph}(X)$ is a separable space.

\item $X$ has a countable $\pi$-network consisting of
$\mathcal{I}$-sets.

\end{enumerate}

\end{corollary}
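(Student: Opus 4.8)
The plan is to deduce this corollary directly from the preceding Theorem on $C_{ph}(X)$, by observing that a separable metrizable space automatically satisfies all of the standing hypotheses there and that the ``separable submetrizable'' clause in its condition (2) becomes redundant. First I would record the two elementary facts I need: every separable metrizable space $X$ is Tychonoff and possesses a countable base, hence a countable $\pi$-base, so that the Theorem is applicable to $X$; and every metrizable space is submetrizable, since its own (metrizable) topology serves as a coarser metrizable topology, so that $X$, being in addition separable, is a separable submetrizable space.

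For the implication $(1)\Rightarrow(2)$, I would apply the corresponding implication of the Theorem: since $X$ is a Tychonoff space with a countable $\pi$-base and $C_{ph}(X)$ is separable, the Theorem yields that $X$ is separable submetrizable and carries a countable $\pi$-network consisting of $\mathcal{I}$-sets. Discarding the (now automatic) submetrizability information leaves exactly condition (2) of the corollary.

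For the converse $(2)\Rightarrow(1)$, I would assemble the full hypothesis of the Theorem's condition (2). The $\pi$-network of $\mathcal{I}$-sets is given by assumption, and by the elementary facts above $X$ is already a separable submetrizable space; together these are precisely condition (2) of the Theorem. Since $X$ also has a countable $\pi$-base, the Theorem applies and its implication $(2)\Rightarrow(1)$ gives that $C_{ph}(X)$ is separable.

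I do not anticipate a genuine obstacle here: the mathematical content lies entirely in the previously established Theorem, and the work of the corollary is the bookkeeping observation that, once $X$ is assumed separable metrizable, the separable-submetrizable requirement is free and may be dropped from the characterization. The only point that requires a moment's care is the convention that ``coarser'' includes equality, which is what makes a metrizable space count as submetrizable.
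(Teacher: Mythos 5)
Your proposal is correct and is exactly the deduction the paper intends: the corollary is stated without proof precisely because it is the immediate specialization of the preceding theorem (Tychonoff with countable $\pi$-base) to separable metrizable spaces, where Tychonoffness, the countable $\pi$-base, and separable submetrizability all come for free. Your attention to the convention that ``coarser'' includes equality is the right (and only) point of care, and it matches the paper's own usage.
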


\begin{theorem}\label{th3}
If $X$ is a Tychonoff space with network consisting non-trivial
connected sets, then the following are equivalent.

\begin{enumerate}

\item $C_{ph}(X)$ is a separable space.

\item $X$ is a separable submetrizable space.

\end{enumerate}

\end{theorem}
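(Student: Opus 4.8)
The plan is to prove the two implications separately, with essentially all the substance in $(2)\Rightarrow(1)$.

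For $(1)\Rightarrow(2)$ I would argue exactly as in the first step of Theorem~\ref{th2}: the identity map $id_{1}:C_{ph}(X)\mapsto C_{p}(X)$ is continuous, so $C_{p}(X)$ is separable, and, as already recorded in the opening remarks of this section, this forces $X$ to be a separable submetrizable space. Connectedness of the network plays no role in this direction.

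For $(2)\Rightarrow(1)$ the key observation is that submetrizability supplies a \emph{single} countable family of continuous functions that simultaneously witnesses that every non-trivial connected set is an $\mathcal{I}$-set. Concretely, fix a coarser separable metrizable topology $\tau_1\subseteq\tau$ on $X$, a countable base $\beta=\{B_j\}$ of $(X,\tau_1)$, and a sequence $\{\phi_n\}\subseteq C(X)$ of $\tau_1$-continuous (hence $\tau$-continuous) functions separating the points of $X$ (the coordinates of an embedding of $(X,\tau_1)$ into the Hilbert cube). If $N$ is any non-trivial connected subset of $X$, it contains two points $a\neq b$; since $\{\phi_n\}$ separates points, some $\phi_n$ satisfies $\phi_n(a)\neq\phi_n(b)$, and then $\phi_n(N)$, a connected subset of $\mathbb{R}$ containing two distinct reals, is a non-degenerate interval. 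Thus the fixed countable family $\{\phi_n\}$ witnesses the $\mathcal{I}$-set property of all non-trivial connected sets at once, while the network hypothesis guarantees that every non-empty $\tau$-open set contains such a set.

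With this in hand I would repeat the construction of the dense set from the proof of Theorem~\ref{th2}, replacing the witnesses $h_i$ of a countable $\pi$-network of $\mathcal{I}$-sets by the universal family $\{\phi_n\}$. Let $G$ be the countable set of all finite sums of functions of two types: localized rational constants $d_{j,k,v}$ (equal to $v\in\mathbb{Q}$ on $B_j$ and to $0$ off $B_k$, for pairs with $\overline{B_j}^{\,\tau_1}\subseteq B_k$), and localized affine copies $\rho_{j,k}\cdot(\alpha\phi_n+\beta)$ with $\alpha,\beta\in\mathbb{Q}$, where $\rho_{j,k}$ is a $\tau_1$-Urysohn function equal to $1$ on $B_j$ and to $0$ off $B_k$; all of these are $\tau$-continuous because $\tau_1\subseteq\tau$. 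Given a basic neighbourhood $W=\bigcap_{i=1}^{m}[x_i,V_i]^{+}\cap\bigcap_{k=1}^{n}[U_k,r_k]^{-}$ as in Proposition~2.2 of \cite{amk}, I would separate the finitely many points $x_i$ and chosen points $y_k\in U_k$ by $\tau_1$-basic neighbourhoods with pairwise disjoint $\tau_1$-closures. On the neighbourhood of each $x_i$ I place the rational constant $v_i\in V_i\cap\mathbb{Q}$, securing $[x_i,V_i]^{+}$. Inside each $U_k$ I use the network to select a non-trivial connected $N_k\subseteq U_k\cap B_{s'}$ through $y_k$; choosing $\phi_n$ non-constant on $N_k$, so that $\phi_n(N_k)$ is a non-degenerate interval $J$, I pick rationals $\alpha,\beta$ with $(r_k-\beta)/\alpha$ interior to $J$ (possible since $J$ has non-empty interior), whence $\alpha\phi_n+\beta$ attains the value $r_k$ at some point of $N_k\subseteq U_k$; after localizing by $\rho$ this secures $[U_k,r_k]^{-}$. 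Because the supports are disjoint, the corresponding finite sum $g\in G$ meets all the constraints, so $g\in W$, proving $G$ dense.

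The main obstacle is precisely the passage to a \emph{countable} dense set: the network of connected sets may be uncountable and $(X,\tau)$ need not possess a countable $\pi$-base, so Theorem~\ref{th2} cannot be invoked verbatim (its hypotheses include a countable $\pi$-base and a countable $\pi$-network of $\mathcal{I}$-sets). The universal-witness observation of the third paragraph is what removes this obstacle, since it lets the same countably many functions $\{\phi_n\}$ serve for all the uncountably many connected sets produced by the network. The remaining points — selecting the $\tau_1$-Urysohn bumps, arranging disjoint supports, and checking continuity of the glued functions — are routine and identical to the bookkeeping already carried out in the proof of Theorem~\ref{th2}.
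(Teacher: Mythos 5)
Your proof is correct, and in the implication $(2)\Rightarrow(1)$ it takes a genuinely different route from the paper's. The paper never introduces a point-separating family: its countable dense set consists of functions prescribed to take constant rational values $p_i$ on finitely many pairwise disjoint basic closed sets $\overline{B_{s_i}}$ of the coarser metric topology (such functions exist by normality of $(X,\tau_1)$ and are $\tau$-continuous since $\tau_1\subseteq\tau$). To secure the constraints $[U_k,r_k]^{-}$ it picks two distinct points $s_k,t_k$ in a connected network element $A_k\subseteq U_k$, gathers all the $s_k$ into one set $S\in\beta$ and all the $t_k$ into another set $T\in\beta$ with $\overline{S}\cap\overline{T}=\emptyset$ (here $\beta$ is taken closed under finite unions), and prescribes the value $p$ on $\overline{S}$ and $q$ on $\overline{T}$, where $p<\min_k r_k$ and $q>\max_k r_k$; connectedness of $A_k$ then forces $f(A_k)\supseteq[p,q]\ni r_k$, so a single pair of straddling constants settles all $n$ constraints simultaneously. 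You instead extract from the Hilbert-cube embedding of $(X,\tau_1)$ a universal countable family $\{\phi_n\}$ witnessing that every non-trivial connected set is an $\mathcal{I}$-set, and hit each $r_k$ by a rational affine rescaling of a $\phi_n$ that is non-constant on $N_k$, localized by a bump function. Both arguments rest on the same fact --- continuous images of connected sets are intervals --- and both correctly avoid citing the countable-$\pi$-base theorem verbatim (as you note, $(X,\tau)$ need not have a countable $\pi$-base). What yours buys is the conceptual point that submetrizability makes the connected network a $\pi$-network of $\mathcal{I}$-sets with uniformly chosen countable witnesses, linking this theorem to the $\mathcal{I}$-set characterization; what the paper's buys is economy: no separating family, no affine rescaling, no multiplication by Urysohn bumps, only prescribed-value functions plus the intermediate value theorem. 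One piece of bookkeeping that both you and the paper leave implicit: the points $y_k\in U_k$ must be chosen distinct from the $x_i$ (possible, since every nonempty open set contains a non-trivial connected set and is therefore infinite), so that the required pairwise disjoint $\tau_1$-basic neighbourhoods exist.
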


\begin{proof}  $(1) \Rightarrow (2)$.  The map $id_{1}:
C_{ph}(X)\mapsto C_{p}(X)$ is a continuous map, hence $C_{p}(X)$
is a separable space. It follow that $X$ is a separable
submetrizable space.

 $(2) \Rightarrow (1)$.
Let $X$ be a separable submetrizable space,
  i.e. $X$ has coarser separable metric topology $\tau_1$ and $\gamma$ be network of $X$ consisting non-trivial
connected sets. Let $\beta=\{B_i\}$ be a countable base of
$(X,\tau_1)$. We can assume that $\beta$ closed under finite union
of its elements.

For each finite family $\{B_{s_i}\}_{i=1}^d\subset \beta$ such
that $\overline{B_{s_i}}\bigcap \overline{B_{s_j}}=\emptyset$ for
$i\neq j$ and $i,j\in \overline{1,d}$ and $\{p_i\}_{i=1}^d \subset
\mathbb{Q}$ we fix $f=f_{s_1,...,s_d,p_1...,p_d}\in C(X)$ such
that $f(\overline{B_{s_{i}}})=p_i$ for each $i=\overline{1,d}$.


Let $G$ be the set of functions $f_{s_1,...,s_d,p_1...,p_d}$ where
$s_i\in\mathbb{N}$ and $p_i\in \mathbb{Q}$ for $i\in\mathbb{N}$.
We claim that the countable set $G$ is dense set of $C_{ph}(X)$.

 By proposition 2.2 in \cite{amk}, let

 $W=[x_1,V_1]^{+}\bigcap...
\bigcap[x_m,V_m]^{+}\bigcap[U_1,r_1]^{-}\bigcap...\bigcap[U_n,r_n]^{-}$
be a base set of $C_{ph}(X)$ where $n,m\in\mathbb{N}$, $x_{i}\in
X$, $V_{i}$ is open set of $\mathbb{R}$ for $i\in \overline{1,m}$,
$U_{j}$ is open set of $X$ and $r_j\in \mathbb{R}$ for $j\in
\overline{1,n}$ and for $i\neq j$, $x_{i}\neq x_{j}$ and
$\overline{U_{i}}\bigcap \overline{U_{j}}=\emptyset$.

Choose $B_{s_l}\in \beta$ for $l=\overline{1,n+m}$ such that
$\overline{B_{s_{l_1}}}\bigcap \overline{B_{s_{l_2}}}=\emptyset$
for $l_1\neq l_2$ and $l_1,l_2\in \overline{1,n+m}$ and $x_i\in
B_{s_l}$ for $l\in \overline{1,m}$ and $B_{s_l}\bigcap U_{k}\neq
\emptyset$ for $l\in \overline{m+1,n+m}$ and $k=l-m$. Choose
$B_{s'_l}\in \beta$ for $l\in \overline{1,m}$ such that $x_i\in
B_{s'_l}$ and $\overline{B_{s'_l}}\subseteq B_{s_l}$  and choose
$A_{k}\in \gamma$ for $k\in \overline{1,m}$  such that
$A_{k}\subseteq (U_l\bigcap B_{s_l})$ where $l=k+m$.

Choose different points $s_{k}, t_{k}\in A_{k}$ for every
$k=\overline{1,m}$. Let $S,T\in \beta$ such that
$\overline{S}\bigcap \overline{T}=\emptyset$,
$\overline{B_l}\bigcap \overline{S}=\emptyset$,
$\overline{B_l}\bigcap \overline{T}=\emptyset$ for $l\in
\overline{1,m}$ and $s_{k}\in S$ and $t_{k}\in T$ for all
$k=\overline{1,m}$.

Fix points $v_i\in (V_i\bigcap \mathbb{Q})$ for $i\in
\overline{1,m}$.

Choose $p,q\in \mathbb{Q}$ such that $p<\min\{r_i :
i=\overline{1,n}\}$ and $q>\max\{r_i : i=\overline{1,n}\}$.

Let $$f(x)= \left\{
\begin{array}{rcl}

p  & for &  x\in \overline{S}                              \\
q &  for &  x\in \overline{T}                 \\
v_l & for & x\in \overline{B_{s'_l}} \\
\end{array}
\right.
$$

where $l\in \overline{1,m}$.

Note that $f\in W$. This proves theorem.

\end{proof}

\begin{theorem}\label{th4}
If $X$ is a locally connected space without isolated points, then
the following are equivalent.

\begin{enumerate}

\item $C_{ph}(X)$ is a separable space.

\item $X$ is a separable submetrizable space.

\end{enumerate}

\end{theorem}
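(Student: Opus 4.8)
The plan is to deduce this theorem from Theorem~\ref{th3} by showing that a locally connected space without isolated points automatically carries a network consisting of non-trivial connected sets; once that is established, both implications follow with essentially no further work.

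For $(1) \Rightarrow (2)$ I would argue exactly as in the proofs of Theorems~\ref{th2} and~\ref{th3}: the identity map $id_{1} : C_{ph}(X) \to C_p(X)$ is continuous, so separability of $C_{ph}(X)$ transfers to $C_p(X)$, and $C_p(X)$ being separable is equivalent to $X$ being separable and admitting a coarser separable metric topology, i.e.\ $X$ is a separable submetrizable space. This direction uses neither local connectedness nor the absence of isolated points and is therefore routine.

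The content lies in $(2) \Rightarrow (1)$, where I would reduce to Theorem~\ref{th3}. First, since $X$ is locally connected, every point has a neighbourhood base of open connected sets, so the family of open connected subsets of $X$ is a base for the topology and in particular a network. Second, since $X$ has no isolated points, no singleton is open, hence every non-empty open set contains at least two points; in particular every member of this connected base has more than one point and is thus a non-trivial connected set. Consequently $X$ satisfies the hypothesis of Theorem~\ref{th3}: it possesses a network consisting of non-trivial connected sets, and by assumption it is separable submetrizable. Applying the implication $(2) \Rightarrow (1)$ of Theorem~\ref{th3} yields that $C_{ph}(X)$ is separable.

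The main — and essentially the only — obstacle is the verification that local connectedness together with the absence of isolated points yields non-trivial connected sets forming a network. The single subtle point is to confirm that ``non-trivial connected'' as used in Theorem~\ref{th3} means precisely connected with at least two points, and that open connected sets of $X$ qualify; no independent construction of a countable dense family in $C_{ph}(X)$ is needed, since that construction is already carried out in the proof of Theorem~\ref{th3}.
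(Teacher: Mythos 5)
Your proposal is correct and is precisely the argument the paper intends: Theorem \ref{th4} is stated in the paper without proof, as an immediate corollary of the preceding theorem on Tychonoff spaces with a network of non-trivial connected sets, and your reduction---open connected sets form a base (hence a network) by local connectedness, and each such set is non-trivial because no point is isolated---is exactly the missing link. The implication $(1) \Rightarrow (2)$ via the continuity of $id_{1}: C_{ph}(X)\mapsto C_{p}(X)$ is likewise the same routine argument the paper uses in its other proofs.
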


\begin{definition} Let $(X,\tau)$ be a topological space. Define a cardinal function
$\xi(X)=min\{|\gamma|:$ for every finite family of pairwise
disjoint nonempty open subsets $\{V_i\}_{i=1}^{k}$ of $X$ there is
family of pairwise disjoint nonempty zero-sets
$\gamma'=\{Z_i\}_{i=1}^{k}\subseteq \gamma$ such that
$V_{i}\bigcap Z_{i}\neq \emptyset$ for $i=\overline{1,k}\}$.

\end{definition}

Obviously, that $\xi(X)\leq \pi w(X)$.

\begin{theorem}\label{th3}
If $X$ is a locally connected space without isolated points, then
the following are equivalent.

\begin{enumerate}

\item $C_{h}(X)$ is a separable space.

\item $\xi(X)=\aleph_0$.

\end{enumerate}

\end{theorem}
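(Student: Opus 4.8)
The plan is to prove the two implications separately, with $(2)\Rightarrow(1)$ as the substantial direction; throughout I take $X$ to be Tychonoff, as in the preceding results. In both directions I use the $C_{h}(X)$-form of Proposition 2.2 in \cite{amk}: a basic neighbourhood in $C_{h}(X)$ may be written as $W=\bigcap_{j=1}^{n}[U_j,r_j]^-$ where the open sets $U_1,\dots,U_n$ have pairwise disjoint closures and $r_j\in\mathbb{R}$.

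For $(1)\Rightarrow(2)$ I would begin from a countable set $\{g_m\}$ dense in $C_{h}(X)$ and set $\gamma=\{g_m^{-1}(r):m\in\mathbb{N},\ r\in\mathbb{Q}\}$, a countable family of zero-sets. Given pairwise disjoint nonempty open sets $V_1,\dots,V_k$, pick distinct points $y_i\in V_i$ and distinct rationals $r_1,\dots,r_k$; by complete regularity some $f\in C(X)$ has $f(y_i)=r_i$, so $\bigcap_{i=1}^{k}[V_i,r_i]^-$ is a nonempty open set. Density yields some $g_m$ in it, and then $Z_i=g_m^{-1}(r_i)\in\gamma$ are pairwise disjoint (distinct values) and meet the respective $V_i$, giving $\xi(X)\le\aleph_0$. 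Since $X$ is Hausdorff without isolated points it contains, for every $k$, a family of $k$ pairwise disjoint nonempty open sets, so any witnessing family is infinite and $\xi(X)=\aleph_0$. This direction uses neither local connectedness nor exact values, only density.

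The heart of the matter is $(2)\Rightarrow(1)$, where the difficulty is that membership in $[U,r]^-$ requires attaining the \emph{exact} real value $r$ inside $U$, while a countable family must cope with uncountably many $(U,r)$. I would resolve this by the intermediate-value device already used for spaces with a network of nontrivial connected sets: replace ``attain $r$'' by ``straddle $r$ with rational values and apply connectedness''. Fix a countable family $\gamma=\{Z_n\}$ witnessing $\xi(X)=\aleph_0$, and let $G$ consist, for every finite pairwise-disjoint subfamily of $\gamma$ split into two blocks $\mathcal{A},\mathcal{B}$ and every pair of rationals $p<q$, of one fixed continuous function equal to $p$ on $\bigcup\mathcal{A}$ and to $q$ on $\bigcup\mathcal{B}$; such a function exists because the two disjoint unions are disjoint zero-sets, hence completely separated, and $G$ is manifestly countable. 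Now given $W=\bigcap_{j=1}^{n}[U_j,r_j]^-$ with the $U_j$ pairwise disjoint, use local connectedness to select inside each $U_j$ a nontrivial connected open set $C_j$, then (using absence of isolated points and the Hausdorff property) split $C_j$ into two disjoint nonempty open pieces $C_j^{-},C_j^{+}$. These $2n$ pairwise disjoint open sets are met, by the defining property of $\gamma$, by $2n$ pairwise disjoint zero-sets $Z_j^{-},Z_j^{+}\in\gamma$. Taking rationals $p<\min_j r_j$, $q>\max_j r_j$ and the member of $G$ equal to $p$ on $\bigcup_j Z_j^{-}$ and $q$ on $\bigcup_j Z_j^{+}$ gives an $f$ with $f=p$ at a point of $C_j^{-}\subseteq C_j$ and $f=q$ at a point of $C_j^{+}\subseteq C_j$; connectedness of $C_j$ forces $f(C_j)\supseteq[p,q]\ni r_j$, so $f$ attains $r_j$ inside $U_j$ and $f\in W$.

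The main obstacle is arranging the construction so that the dense family stays countable: the sets $C_j,C_j^{\pm}$ and the zero-sets $Z_j^{\pm}$ all depend on the arbitrary $W$, so each candidate $f$ must be assembled in advance from data living only in the countable index set ($\gamma$ together with $\mathbb{Q}$). The two facts that make this work are that disjoint zero-sets are completely separated, so the interpolating functions can be fixed beforehand and indexed solely by the chosen zero-sets and the pair $p<q$, and that local connectedness together with the lack of isolated points lets one manufacture, inside any open $U_j$, a connected set bearing two separated open pieces on which to anchor the low and high rational values. Checking that the $2n$ auxiliary open sets are pairwise disjoint, so that the property defining $\xi(X)$ applies, is routine once one notes $C_j^{\pm}\subseteq C_j\subseteq U_j$ with the $U_j$ pairwise disjoint.
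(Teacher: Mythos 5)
Your proposal is correct and follows essentially the same route as the paper's proof: for $(1)\Rightarrow(2)$ both extract the witnessing zero-sets as preimages under a dense-family member lying in $\bigcap_{i}[V_i,r_i]^{-}$ (you use point-preimages $g_m^{-1}(r)$, $r\in\mathbb{Q}$, the paper uses $f_i^{-1}(\overline{B_j})$ for basic intervals $B_j$), and for $(2)\Rightarrow(1)$ both build a countable family of functions equal to a low rational $p$ on one finite union of pairwise disjoint zero-sets and a high rational $q$ on another, then use connected open subsets of the $U_j$ and the intermediate value theorem to force each value $r_j$ to be attained. If anything, your write-up is slightly more careful than the paper's, e.g.\ you explicitly place the two anchoring open sets inside the connected set $C_j\subseteq U_j$ (the paper leaves the analogous containment of its $O_i$, $O^i$ in $S_{\alpha_i}$ implicit) and you justify nonemptiness of $\bigcap_i[V_i,r_i]^{-}$ via the Tychonoff property.
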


\begin{proof} $(1) \Rightarrow (2)$. Let $A=\{f_i\}$ be a
countable dense set of $C_{h}(X)$ and $\beta=\{B_j\}$ be a
countable base of $\mathbb{R}$. Define a family
$\gamma=\{f_i^{-1}(\overline{B_j}): i,j\in \mathbb{N} \}$.

Let $\{V_s\}_{s=1}^k$ be a finite family of pairwise disjoint
nonempty open subsets of $X$. Consider an
 open base set $W=[V_1,1]^{-}\bigcap...\bigcap[V_k,k]^{-}$. Then
 there are $f_{i'}\in A\bigcap W$ and the family $\{B_{j_s}: s\in B_{j_s}$ for $s\in  \overline{1,k}$  and $\overline{B_{j_s'}}\bigcap \overline{B_{j_s^{''}}}=\emptyset$
  for $s'\neq s^{''}$ and $s',s^{''}\in \overline{1,k}\}$ such that
 $\gamma'=\{ f_{i'}^{-1}(\overline{B_{j_s}})\}_{s=1}^{k}$ required the subfamily of $\gamma$.

$(2) \Rightarrow (1)$. Let $\gamma=\{F_{i}\}$ be family of
zero-sets from definition of $\xi(X)$ such that
$|\gamma|=\xi(X)=\aleph_0$. We can assume that $\gamma$ closed
under finite union of its elements. Consider countable set of
continuous functions

 $A=\{f_{i,j,p,q}\in C(X) : f_{i,j,p,q}(F_i)=p$ and
$f_{i,j,p,q}(F_j)=q$ for $F_i,F_j\in \gamma$ such that
$F_{i}\bigcap F_{j}=\emptyset$ and $p,q\in \mathbb{Q} \}$.

Let $W=[U_1,r_1]^{-}\bigcap...\bigcap[U_n,r_n]^{-}$ be a base set
of $C_{h}(X)$ where $n\in\mathbb{N}$, $U_{j}$ is open set of $X$
and $r_j\in \mathbb{R}$ for $j\in \overline{1,n}$ and for $i\neq
j$, $\overline{U_{i}}\bigcap \overline{U_{j}}=\emptyset$.

Fix connected open sets $S_{\alpha_i}$ such that
$S_{\alpha_i}\subset U_{i}$ for $i=\overline{1,n}$. Since
$S_{\alpha_i}$ is not-trivial set there are different points $a_i,
b_i\in S_i$ for $i=\overline{1,n}$. Let $\{O_i\}_{i=1}^{n}$ and
$\{O^i\}_{i=1}^n$ be families  of pairwise disjoint nonempty open
subsets of $X$ such that $a_i\in O_i$, $b_i\in O^i$ and
$(\bigcup_{i=1}^{n} O_i)\bigcap (\bigcup_{i=1}^{n}
O^i)=\emptyset$. There are family of pairwise disjoint nonempty
zero-set sets $\gamma'=\{F_{k}\}_{k=1}^{2n}\subset \gamma$ such
that $F_k\bigcap O_i\neq \emptyset$ for $k=i$  and $F_k\bigcap
O^i\neq \emptyset$ for $k=i+n$. Let $H_1=\bigcup_{k=1}^n F_k$ and
$H_2=\bigcup_{k=n+1}^{2n} F_k$, then consider $f=f_{i',j',p,q}\in
A$ such that $f(F_i')=p$ and $f(F_j')=q$ where $F_i'=H_1$,
$F_j'=H_2$ for some $i',j'\in \mathbb{N}$ and $p,q\in \mathbb{Q}$
such that $p<\min\{r_i : i=\overline{1,n}\}$ and $q>\max\{r_i :
i=\overline{1,n}\}$. Note that $f\in W$. This proves theorem.

\end{proof}

\section{Consistent counter examples}

Recall that a set of reals $X$ is {\it null} if for each positive
$\epsilon$ there exists a cover $\{I_{n}\}_{n\in \mathbb{N}}$ of
$X$ such that $\sum_n diam(I_n)< \epsilon$. A set of reals $X$ has
{\it strong measure zero} if, for each sequence
$\{\epsilon_n\}_{n\in \mathbb{N}}$ of positive reals, there exists
a cover $\{I_{n}\}_{n\in \mathbb{N}}$ of $X$ such that
$diam(I_n)<\epsilon_n$ for all $n$. For example, every Lusin set
has strong measure zero.

\begin{example}$(\bf CH)$ Let $X$ be a set of reals and it has strong measure zero.
Consider a space $C_{h}(X)$. Note that the property {\it has
strong measure zero} is invariant with respect to continuous
mappings \cite{kur}. Let $A=\{f_i\}_i\subset C(X)$ be a countable
set of continuous functions and $X_i=X$ for each $i\in
\mathbb{N}$. Direct sum $Y=\bigoplus_i X_i$ has strong measure
zero. Hence a set $F(Y)\subset \mathbb{R}$ has strong measure zero
where $F$ is a continuous real-valued function on $Y$. So if
$F\upharpoonright X_i=f_i$ we have that $\bigcup_i f_i(X)\neq
\mathbb{R}$. It follows that $C_{h}(X)$ ( a fortiori $C_{ph}(X)$ )
isn't a separable space.

\end{example}

 In \cite{mill} was shown that it is consistent with ZFC that for any set of
 reals of cardinality the continuum, there is a (uniformly) continuous map
 from that set onto the closed unit interval. In fact, this holds
 in the iterated perfect set model.

\begin{theorem}( the iterated perfect set model)\label{th20}

If $X$ is a separable metrizable space, then  the following are
equivalent.

\begin{enumerate}

\item $C_{ph}(X)$ is a separable space.

\item  $\Delta(X)=\mathfrak{c}$.

\end{enumerate}

\end{theorem}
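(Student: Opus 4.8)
I would prove the two implications separately; the forward one is essentially recorded in the introductory remarks of Section~2, whereas the reverse one carries the full weight of the set-theoretic hypothesis. For $(1)\Rightarrow(2)$: the identity map $id_{2}:C_{ph}(X)\to C_{h}(X)$ is continuous, so separability of $C_{ph}(X)$ gives separability of $C_{h}(X)$, whence $\Delta(X)\ge\mathfrak{c}$ by the remark opening Section~2. Since $X$ is separable metrizable it embeds in the Hilbert cube, so $|X|\le\mathfrak{c}$ and every nonempty open set has cardinality at most $\mathfrak{c}$; combined with $\Delta(X)\ge\mathfrak{c}$ this forces $\Delta(X)=\mathfrak{c}$.

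For $(2)\Rightarrow(1)$ the strategy is to invoke the Corollary for separable metrizable spaces, reducing the problem to producing a countable $\pi$-network of $X$ consisting of $\mathcal{I}$-sets. Fix a countable base $\{B_n\}$ of $X$; since $\Delta(X)=\mathfrak{c}$ each $B_n$ has cardinality $\mathfrak{c}$. Using regularity of the metric space $X$, for each $n$ I would choose a nonempty open $V_n$ with $\overline{V_n}\subseteq B_n$ and set $C_n=\overline{V_n}$, so that $C_n$ is closed in $X$, is contained in $B_n$, and satisfies $\mathfrak{c}=\Delta(X)\le|V_n|\le|C_n|\le|X|\le\mathfrak{c}$, i.e.\ $|C_n|=\mathfrak{c}$. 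If each $C_n$ is an $\mathcal{I}$-set, then $\{C_n\}$ is the required $\pi$-network, since every nonempty open $U$ contains some $B_n\supseteq C_n$, and then $C_{ph}(X)$ is separable by the Corollary.

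To show $C_n$ is an $\mathcal{I}$-set I would embed the separable metrizable space $C_n$ homeomorphically into the Hilbert cube $[0,1]^{\omega}$, with coordinate maps $e_k:C_n\to[0,1]$, and split into two cases. If some image $e_k(C_n)\subseteq\mathbb{R}$ contains a nondegenerate interval, a Tietze extension of $e_k$ from the closed set $C_n$ to some $f\in C(X)$ has $f(C_n)=e_k(C_n)$ containing an interval, so $C_n$ is an $\mathcal{I}$-set. Otherwise no $e_k(C_n)$ contains an interval; each such image then has dense complement in $\mathbb{R}$ and is therefore zero-dimensional, so $C_n$, being a subspace of the zero-dimensional product $\prod_k e_k(C_n)$, is itself zero-dimensional, i.e.\ homeomorphic to a set of reals of cardinality $\mathfrak{c}$. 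After discarding from $C_n$ the union $O$ of its relatively open pieces of cardinality $<\mathfrak{c}$ (a countable union of total size $<\mathfrak{c}$, since $\mathrm{cf}(\mathfrak{c})>\aleph_0$, and $C_n\setminus O$ is still closed in $X$), I may assume $C_n$ is a set of reals in the precise sense of the paper. Here the model enters: by Miller's theorem, valid in the iterated perfect set model, there is a continuous surjection $g:C_n\to[0,1]$, and a Tietze extension of $g$ witnesses that $C_n$ is an $\mathcal{I}$-set.

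The hardest and genuinely model-dependent step is the last one: in $ZFC$ alone a set of reals of size $\mathfrak{c}$ need not admit a continuous surjection onto $[0,1]$ --- indeed the Example above shows that consistently some such set has $C_{h}$ nonseparable --- so the equivalence really needs the iterated perfect set model via Miller's result. The secondary technical point is the dimension dichotomy: one must notice that the absence of an interval in every coordinate projection forces $C_n$ to be zero-dimensional, which is exactly the situation to which Miller's theorem applies.
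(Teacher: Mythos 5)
Your proof is correct, and it rests on the same three pillars as the paper's: Miller's theorem in the iterated perfect set model, the Tietze--Urysohn extension theorem, and the Corollary reducing separability of $C_{ph}(X)$ for separable metrizable $X$ to the existence of a countable $\pi$-network of $\mathcal{I}$-sets. The difference lies in how the dichotomy ``$\mathcal{I}$-set or zero-dimensional'' is obtained, and on which sets. The paper argues by contradiction that \emph{every} nonempty open set $U$ is an $\mathcal{I}$-set: if not, then for each $x\in U$ a function $f$ vanishing off $U$ with $f(x)=1$ must miss some value $r$, so $f^{-1}([r,1])=f^{-1}((r,1])$ is a clopen neighborhood of $x$ inside $U$; this gives $U$ a clopen base, hence $U$ is zero-dimensional, and then a closed subset $\overline{W}\subset U$ is a set of reals of cardinality $\mathfrak{c}$ to which Miller plus Tietze applies, contradicting the assumption. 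You instead work directly (no contradiction) with closed sets $C_n=\overline{V_n}$ sitting inside base elements, and derive the dichotomy from a Hilbert-cube embedding: either some coordinate image contains an interval (immediate $\mathcal{I}$-set via Tietze, a case the paper's contradiction structure hides) or all coordinate images are zero-dimensional, forcing $C_n$ to be zero-dimensional. Your route has the advantage that the sets are closed from the outset, so Tietze is available without the intermediate passage $U\rightsquigarrow\overline{W}$; the paper's route yields the formally stronger conclusion that all nonempty open sets are $\mathcal{I}$-sets. You also spell out $(1)\Rightarrow(2)$, which the paper leaves to its Section~2 remarks.

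Two small observations. First, your trimming step (removing the union $O$ of relatively open pieces of size $<\mathfrak{c}$) is vacuous: any open $W\subseteq X$ meeting $\overline{V_n}$ already meets $V_n$, so every nonempty relatively open subset of $C_n$ contains a nonempty open subset of $X$ and thus has cardinality $\Delta(X)=\mathfrak{c}$; hence $O=\emptyset$ and $C_n$ is a set of reals in the paper's sense as it stands. Second, both you and the paper implicitly use that an abstract zero-dimensional separable metrizable space embeds in $2^{\omega}\subset\mathbb{R}$, which is what lets Miller's theorem (stated for sets of reals) apply; this is standard and harmless, but worth saying once.
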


\begin{proof} $(2) \Rightarrow (1)$. Note that in the iterated perfect set model
every nonempty open set of $X$ is a $\mathcal{I}$-set. Really,
suppose that $U$ is a nonempty open set of $X$, but it isn't a
$\mathcal{I}$-set. Then $U$ is a set of reals of cardinality the
continuum. Note that for each point $x\in U$ there exist
continuous function $f:X\rightarrow \mathcal{I}$ such that
$f^{-1}(0) \supseteq X\setminus U$ and $f^{-1}(1)\ni x$. Clearly,
that there is $r\in \mathcal{I}$ such that $r\notin f(U)$. It
follows that $f^{-1}([r,1])$ is clopen neighborhood of $x$ and $U$
is a zero-dimensional subspace of $X$. Let $W$ be an open set such
that $\overline{W}\subset U$. Then $\overline{W}$ is a set of
reals of cardinality the continuum. By the iterated perfect set
model there exist continuous function $h$ from $\overline{W}$ onto
the closed unit interval $\mathcal{I}$. Therefore, from
Tietze-Urysohn Extension Theorem, there is a continuous function
$F:X\rightarrow \mathbb{R}$ such that $F\upharpoonright
\overline{W}=h$ and $F(U)\supseteq \mathcal{I}$. This contradicts
our assumption.

 By Theorem ~\ref{th2}, $C_{ph}(X)$ is a separable
space.

\end{proof}

\section{Remarks}

Now we state two results of \cite{amk1} that give for the density
of the spaces $C_{h}(X)$ and $C_{ph}(X)$.

1.(\cite{amk1}, Theorem 4.21)\label{th10} If $X$ is a locally
connected space with no isolated points, then $d(C_{h}(X))= \pi
w(X)$.

2.(\cite{amk1}, Theorem 4.22)\label{th15} If $X$ is a locally
connected space with no isolated points, then $d(C_{ph}(X))= \pi
w(X)\cdot iw(X)$.

\medskip
We note that these results are false (equality can not be !), but
in these results meaning an upper bound for the density of the
spaces $C_{h}(X)$ and $C_{ph}(X)$.

\begin{theorem}
If $X$ is a locally connected space with no isolated points, then
$d(C_{h}(X))\leq \pi w(X)$.
\end{theorem}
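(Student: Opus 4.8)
The plan is to generalize the implication $(2)\Rightarrow(1)$ of the preceding theorem (the characterization of separability of $C_h(X)$ via $\xi(X)=\aleph_0$) to the uncountable case, exploiting the inequality $\xi(X)\le \pi w(X)$ recorded above. Set $\kappa=\pi w(X)$; since $X$ is locally connected with no isolated points it is infinite, so $\kappa\ge\aleph_0$. I will produce a dense subset of $C_h(X)$ of cardinality at most $\kappa$, which gives $d(C_h(X))\le\pi w(X)$.

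First I would fix, using $\xi(X)\le\pi w(X)=\kappa$, a family $\gamma=\{F_\alpha:\alpha<\kappa\}$ of zero-sets witnessing $\xi(X)\le\kappa$, i.e. such that for every finite family of pairwise disjoint nonempty open sets $\{V_i\}_{i=1}^k$ there are pairwise disjoint $Z_i\in\gamma$ with $V_i\cap Z_i\ne\emptyset$. Replacing $\gamma$ by the family of all finite unions of its members leaves the cardinality at $\le\kappa$ (as $\kappa$ is infinite), so I may assume $\gamma$ is closed under finite unions. Next, in exact analogy with the countable case, let $A$ consist of all continuous functions $f_{\alpha,\beta,p,q}$, one for each choice of disjoint $F_\alpha,F_\beta\in\gamma$ and $p,q\in\mathbb{Q}$, satisfying $f_{\alpha,\beta,p,q}(F_\alpha)=p$ and $f_{\alpha,\beta,p,q}(F_\beta)=q$. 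There are at most $\kappa$ pairs $(F_\alpha,F_\beta)$ and countably many pairs $(p,q)$, so $|A|\le\kappa\cdot\aleph_0=\kappa$.

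It remains to verify that $A$ is dense, and here the argument is identical to the countable case. Given a basic open set $W=[U_1,r_1]^-\cap\dots\cap[U_n,r_n]^-$ with $\overline{U_i}\cap\overline{U_j}=\emptyset$ for $i\ne j$, I would choose connected open sets $S_{\alpha_i}\subseteq U_i$; since $X$ has no isolated points these are nontrivial, so each contains distinct points $a_i,b_i$, which I separate inside $S_{\alpha_i}$ by disjoint open sets $O_i\ni a_i$ and $O^i\ni b_i$. The $2n$ sets $O_1,\dots,O_n,O^1,\dots,O^n$ are then pairwise disjoint (within a fixed $i$ by construction, across distinct $i$ because the $S_{\alpha_i}$ lie in the disjoint $U_i$). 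Applying the $\xi$-property of $\gamma$ to this family yields pairwise disjoint zero-sets meeting the respective $O_i,O^i$; taking unions gives $H_1,H_2\in\gamma$ with $H_1$ meeting each $O_i\subseteq S_{\alpha_i}$ and $H_2$ meeting each $O^i\subseteq S_{\alpha_i}$. Choosing $p<\min_i r_i$ and $q>\max_i r_i$ in $\mathbb{Q}$, the function $f\in A$ with $f(H_1)=p$, $f(H_2)=q$ takes on $S_{\alpha_i}$ both a value $p<r_i$ and a value $q>r_i$; since $f(S_{\alpha_i})$ is connected it is an interval containing $r_i$, so $f^{-1}(r_i)\cap U_i\ne\emptyset$ and $f\in[U_i,r_i]^-$ for each $i$, whence $f\in W$.

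The conceptual work is already carried by the earlier theorem and by the bound $\xi(X)\le\pi w(X)$; the only genuinely new ingredient is cardinality bookkeeping, so the single point requiring care is the estimate $|A|\le\kappa$, which rests on the observation that closing $\gamma$ under finite unions preserves $|\gamma|\le\kappa$ for infinite $\kappa$. No step in the density verification enlarges the cardinal, so $d(C_h(X))\le|A|\le\kappa=\pi w(X)$, as claimed.
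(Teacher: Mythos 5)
Your proof is correct and follows essentially the route the paper intends: the theorem is stated in the Remarks section without proof, and the natural argument is exactly the one you give, namely the cardinal generalization of the $(2)\Rightarrow(1)$ direction of the paper's theorem characterizing separability of $C_{h}(X)$ for locally connected spaces without isolated points by $\xi(X)=\aleph_0$, combined with the recorded inequality $\xi(X)\le \pi w(X)$ and the same cardinality bookkeeping for the family of functions $f_{\alpha,\beta,p,q}$. Your explicit requirement that the separating open sets $O_i\ni a_i$ and $O^i\ni b_i$ be taken inside $S_{\alpha_i}$ (which is what makes the connectedness argument yield $r_i\in f(S_{\alpha_i})$) is a small but welcome sharpening of the paper's own write-up of the countable case.
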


\begin{theorem} If $X$ is a locally connected space with no isolated points, then
$d(C_{ph}(X))\leq \pi w(X)\cdot iw(X)$.

\end{theorem}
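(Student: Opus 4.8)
The plan is to build a dense subset $G\subseteq C(X)$ of cardinality at most $\pi w(X)\cdot iw(X)$ by combining one ingredient for each half of the bi-point-open topology. Since $iw(X)=\kappa$, I would fix a coarser Tychonoff topology $\tau_1$ on $X$ together with a base $\beta$ for $\tau_1$ with $|\beta|\le\kappa$; we may assume $\beta$ is closed under finite unions, which does not increase its cardinality. Since $X$ is locally connected, Tychonoff, and has no isolated points, it admits a $\pi$-base $\mathcal P$ consisting of nontrivial connected open sets with $|\mathcal P|\le\pi w(X)$. Every $\tau_1$-continuous function is $\tau$-continuous, so functions produced on $(X,\tau_1)$ automatically lie in $C(X)$.

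Next I would define $G$ by a finitary parametrization over these two families. For each finite subfamily $\{P_1,\dots,P_n\}\subseteq\mathcal P$ and each finite subfamily $\{B_1,\dots,B_m\}\subseteq\beta$ whose $\tau_1$-closures are pairwise disjoint and disjoint from the chosen $P_j$, and for each choice of rationals $p<q$ and $v_1,\dots,v_m$, I fix (when one exists) a single function $f\in C(X)$ with $f\equiv v_l$ on the $\tau_1$-closure of $B_l$ for $l\le m$ and such that $f$ attains on each $P_j$ both a value $\le p$ and a value $\ge q$; the latter is arranged by selecting two distinct points of the nontrivial connected set $P_j$, enclosing them in $\beta$-sets with disjoint $\tau_1$-closures, and prescribing the values $p$ and $q$ there, after which the Tychonoff property of $\tau_1$ yields the desired continuous extension. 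Counting the finite data shows $|G|\le (\pi w(X))^{<\omega}\cdot|\beta|^{<\omega}\cdot\aleph_0\le \pi w(X)\cdot iw(X)$.

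To prove density I would take a basic open set $W=[x_1,V_1]^{+}\cap\dots\cap[x_m,V_m]^{+}\cap[U_1,r_1]^{-}\cap\dots\cap[U_n,r_n]^{-}$ as in Proposition 2.2 of \cite{amk}, with the $x_i$ distinct and $\overline{U_i}\cap\overline{U_j}=\emptyset$. Using that $\mathcal P$ is a $\pi$-base I would choose connected $P_j\in\mathcal P$ with $P_j\subseteq U_j$; their closures are pairwise disjoint because the $\overline{U_j}$ are. Using that $\tau_1$ is Hausdorff and that the finitely many relevant points are distinct, I would choose $\beta$-sets $B_l\ni x_l$ with pairwise disjoint $\tau_1$-closures, also disjoint from the $P_j$, together with rationals $v_l\in V_l\cap\mathbb{Q}$ and $p<\min_j r_j$, $q>\max_j r_j$. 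The corresponding $f\in G$ then satisfies $f(x_l)=v_l\in V_l$, so $f\in[x_l,V_l]^{+}$; and since $P_j\subseteq U_j$ is connected, $f(P_j)$ is a connected subset of $\mathbb{R}$ containing a value $\le p$ and a value $\ge q$, hence $f(P_j)\supseteq[p,q]\ni r_j$, giving $f^{-1}(r_j)\cap U_j\neq\emptyset$ and $f\in[U_j,r_j]^{-}$. Thus $f\in W$, so $G$ is dense.

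I expect the crux to be the simultaneous intermediate-value step: forcing $f$ to meet every $r_j$ inside the respective $U_j$ with a single pair $(p,q)$, which is exactly where local connectedness (to locate connected $P_j\subseteq U_j$) and the absence of isolated points (to guarantee each $P_j$ has two distinct points) are used. The remaining separation bookkeeping, namely arranging finitely many $\tau_1$-closures to be pairwise disjoint, is routine from the Tychonoff property of $\tau_1$ and the hypothesis $\overline{U_i}\cap\overline{U_j}=\emptyset$, and the cardinality estimate is immediate from the finitary description of $G$.
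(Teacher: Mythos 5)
Your strategy is essentially the paper's own, transcribed to higher cardinality: the paper proves this kind of statement only in the countable case (its theorem on spaces with a network of nontrivial connected sets, where the coarser topology $\tau_1$ is separable \emph{metrizable}), and your family $G$ --- functions with prescribed rational constant values on closures of finitely many $\tau_1$-base elements, forced to oscillate across $[p,q]$ on connected $\pi$-base elements so that the intermediate value theorem produces points of $f^{-1}(r_j)\cap U_j$ --- is exactly that scheme. The genuine gap sits at the one step where the generalization is nontrivial: you justify the existence of a function taking distinct prescribed constant values on the pairwise disjoint closures $\overline{B_l}^{\tau_1}$ and on the closures of the $\beta$-sets enclosing the two chosen points of $P_j$ by saying that ``the Tychonoff property of $\tau_1$ yields the desired continuous extension.'' Complete regularity separates a \emph{point} from a closed set; it does not completely separate two disjoint closed sets, and in a non-normal Tychonoff space a continuous function equal to one constant on $\overline{B_1}^{\tau_1}$ and to a different constant on a disjoint $\overline{B_2}^{\tau_1}$ need not exist. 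In the paper's proved case this is precisely where metrizability (hence normality) of $\tau_1$ is used; for a general coarser Tychonoff topology of weight $iw(X)$ your existence claim is unsupported, and since $G$ contains a function for a given index only ``when one exists,'' the density argument collapses there. A second, smaller defect: you choose $P_j\subseteq U_j$ before the $B_l$, but nothing in Proposition 2.2 of Jindal--McCoy--Kundu prevents $x_l\in U_j$, hence possibly $x_l\in P_j$, or $x_l\in\overline{P_j}^{\tau_1}$ ($P_j$ is only $\tau$-open, so its $\tau_1$-closure can be large); in that case no $B_l\ni x_l$ with $\overline{B_l}^{\tau_1}\cap P_j=\emptyset$ exists at all.

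Both defects are repairable, but by a real rearrangement rather than a cosmetic one. First fix the finitely many points: the $x_l$, together with auxiliary points $y_j\in U_j\setminus\{x_1,\dots,x_m\}$ (nonempty since $X$ is $T_1$ without isolated points); separate all of them by $\tau_1$-open sets with pairwise disjoint $\tau_1$-closures, and only then choose $P_j\in\mathcal{P}$ inside the intersection of $U_j$ with the set assigned to $y_j$, plus distinct points $s_j,t_j\in P_j$. Then build the witness function in the order opposite to yours: for each of the finitely many relevant points $z$ take, by complete regularity of $(X,\tau_1)$, a $\tau_1$-continuous $g_z$ with $g_z(z)=1$ and $g_z=0$ at the remaining points, compose with a map $[0,1]\to[0,1]$ that is constantly $1$ near $1$ and constantly $0$ near $0$, and form the finite linear combination with coefficients $v_l,p,q$; this produces $f_0\in C(X)$ that is constant with the required value on a whole $\tau_1$-neighborhood of each point. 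Only now use regularity of $(X,\tau_1)$ to pick the base elements $B_l$ inside those constancy neighborhoods (and inside the sets separated in advance), so that $f_0$ witnesses that the corresponding index of $G$ carries a function, and every function of $G$ with that index lies in $W$. Alternatively, define $G$ through a condensation of $X$ into the cube $[0,1]^{iw(X)}$ and use normality of the cube to manufacture the prescribed-value functions. With one of these modifications your argument goes through; as written, its key existence step fails.
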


Now we give an example where there is no equality.

\begin{example} Let $X=\oplus_{\alpha<\mathfrak{c}} \mathbb{R}_{\alpha}$ be
a direct sum of real lines $\mathbb{R}$. Then $X$ is a separable
submetrizable space i.e. $iw(X)=\aleph_0$. Clearly, that $\pi
w(X)=\mathfrak{c}$. By Theorem $\ref{th4}$, $C_{ph}(X)$ is
separable, and, hence, $C_{h}(X)$ is separable.

\end{example}

\begin{proposition} If $C_{h}(X)$ is a separable space, then
$C_{h}(\beta X)$ is a separable space.

\end{proposition}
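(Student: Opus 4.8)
The plan is to transfer a countable dense set of $C_{h}(X)$ up to $\beta X$, exploiting the fact that the only obstruction to extending a function from $X$ to $\beta X$ is unboundedness, which can be removed by truncation without disturbing finitely many prescribed values. Fix a countable dense set $A=\{f_i\}$ of $C_{h}(X)$. For each $m\in\mathbb{N}$ let $\tau_m:\mathbb{R}\to[-m,m]$ be the truncation $\tau_m(t)=\max\{-m,\min\{m,t\}\}$. Each $\tau_m\circ f_i$ is then a bounded continuous function on $X$ and so extends uniquely to a continuous function $(\tau_m\circ f_i)^{\beta}\in C(\beta X)$ with $(\tau_m\circ f_i)^{\beta}\upharpoonright X=\tau_m\circ f_i$. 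I would take $D=\{(\tau_m\circ f_i)^{\beta}: i,m\in\mathbb{N}\}$, a countable subset of $C(\beta X)$, and claim it is dense in $C_{h}(\beta X)$.

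To verify density, consider, by Proposition 2.2 in \cite{amk}, a base set $W=[V_1,r_1]^{-}\cap\dots\cap[V_n,r_n]^{-}$ of $C_{h}(\beta X)$, where the $V_j$ are open in $\beta X$ with $\overline{V_i}\cap\overline{V_j}=\emptyset$ for $i\neq j$ (closures taken in $\beta X$). Since $X$ is dense in $\beta X$, each $U_j:=V_j\cap X$ is a nonempty open subset of $X$, and because $\overline{U_j}^{X}\subseteq X\cap\overline{V_j}^{\beta X}$, the closures $\overline{U_j}^{X}$ remain pairwise disjoint. Hence $W'=[U_1,r_1]^{-}\cap\dots\cap[U_n,r_n]^{-}$ is a legitimate base set of $C_{h}(X)$.

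By density of $A$ there is $f_i\in A\cap W'$, that is, there exist points $x_j\in U_j$ with $f_i(x_j)=r_j$ for $j=\overline{1,n}$. I would then choose $m\in\mathbb{N}$ with $m\ge\max_j|r_j|$ and put $g=(\tau_m\circ f_i)^{\beta}\in D$. For each $j$ we have $g(x_j)=\tau_m(f_i(x_j))=\tau_m(r_j)=r_j$, while $x_j\in U_j\subseteq V_j$; thus $g\in[V_j,r_j]^{-}$ for every $j$ and so $g\in W$. This exhibits a member of $D$ in every base set, proving $D$ dense and $C_{h}(\beta X)$ separable.

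The main obstacle is precisely the possible unboundedness of the functions $f_i$: a general element of $C(X)$ need not extend to a real-valued continuous function on $\beta X$, so one cannot simply lift $A$ itself. The truncation $\tau_m$ circumvents this, and the only points requiring care are that clamping to $[-m,m]$ leaves untouched the finitely many values $r_1,\dots,r_n$ relevant to a given base set (guaranteed by choosing $m\ge\max_j|r_j|$) and that the disjointness of closures survives the passage from $\beta X$ to $X$.
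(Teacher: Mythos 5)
Your proof is correct and follows essentially the same route as the paper: both make the functions of a countable dense set of $C_{h}(X)$ bounded so that they extend continuously over $\beta X$, and then verify density by restricting a base set of $C_{h}(\beta X)$ to the dense subspace $X$, where the witnessing points $x_j$ with $f(x_j)=r_j$ already lie. The only difference is cosmetic: the paper achieves boundedness via the homeomorphism $C_{h}(X)\cong C_{h}(X,(0,1))$ and extends each $f_i$ once, whereas you truncate at each level $m$; your variant even sidesteps the minor point that the extension of a $(0,1)$-valued function may attain the values $0$ and $1$ on $\beta X\setminus X$.
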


\begin{proof}
Note that $C_{h}(X)$ is homeomorphic to $C_{h}(X,(0,1))$. Let
$A=\{f_{i}\}$ be a countable dense set of $C_{h}(X,(0,1))$. Then
set $\{\widetilde{f_i}\}$ is countable dense subset of
$C_{h}(\beta X,(0,1))$ where $\widetilde{f_i}\upharpoonright
X=f_{i}$. Really let
$W=[U_1,r_1]^{-}\bigcap...\bigcap[U_n,r_n]^{-}$ be a base set of
$C_{h}(\beta X)$ where $n\in\mathbb{N}$, $U_{j}$ is open set of
$\beta X$ and $r_j\in \mathbb{R}$ for $j\in \overline{1,n}$ and
for $i\neq j$, $\overline{U_{i}}\bigcap
\overline{U_{j}}=\emptyset$. Clearly that
$V=[V_1,r_1]^{-}\bigcap...\bigcap[V_n,r_n]^{-}$ be a open set of
$C_{h}(X)$ where $n\in\mathbb{N}$, $V_{j}=X\bigcap U_{j}$ is open
set of $X$ and $r_j\in \mathbb{R}$ for $j\in \overline{1,n}$ and
for $i\neq j$, $\overline{V_{i}}\bigcap
\overline{V_{j}}=\emptyset$. There is $f_i'\in A\bigcap V$ and it
follows that $\widetilde{f_i'}\in W$.

\end{proof}

\begin{example} Let $X=\mathbb{R}$. By Theorem \ref{th2},
$C_{ph}(X)$ is a separable space, but $C_{ph}(\beta X)$ is not a
separable space because  $\beta X$ is not a separably
submetrizable space.

\end{example}

\medskip

\section{Acknowledgement}

This work was supported by Act 211 Government of the Russian
Federation, contract ¹ 02.A03.21.0006.

\bibliographystyle{model1a-num-names}
\bibliography{<your-bib-database>}







\end{document}